\documentclass[11pt]{article}
\usepackage{fullpage}

\usepackage{amssymb,amsmath,amsthm}
\usepackage{url}
\usepackage{verbatim}

\newtheorem{thm}{Theorem}

\newtheorem{lem}[thm]{Lemma}

\newtheorem{problem}[thm]{Problem}

\theoremstyle{definition}

\newtheorem{remark}[thm]{Remark}

\usepackage{indentfirst}
\usepackage{xspace}
\usepackage[colorlinks=true,linkcolor=blue,citecolor=blue,urlcolor=blue]{hyperref}

\def\G{\mbox{\ensuremath{\mathcal G}}\xspace}
\def\HH{\mbox{\ensuremath{\mathcal H}}\xspace}
\def\K{\mbox{\ensuremath{\mathcal K}}\xspace}

\begin{document}
\title{Note on polychromatic coloring of hereditary hypergraph families II}
\author{D\"om\"ot\"or P\'alv\"olgyi
\thanks{ELTE E\"otv\"os Lor\'and University and Alfr\'ed R\'enyi Institute of Mathematics, Budapest, Hungary. Email: \texttt{domotor.palvolgyi@ttk.elte.hu}. Supported by the NRDI EXCELLENCE-24 grant no.~151504 Combinatorics and Geometry and by the ERC Advanced Grant no.~101054936 ERMiD.}}
\maketitle

\noindent\textbf{Disclaimer.} This manuscript, including this disclaimer, was written by ChatGPT 5.5 Thinking, based on an interactive discussion with D\"om\"ot\"or P\'alv\"olgyi. The mathematical correctness of the manuscript has been verified by the author.

\begin{abstract}
We extend a recent construction concerning polychromatic colorings of hereditary hypergraph families.
For every integer $h\ge 4$ we construct a $(2h-1)$-uniform hypergraph which has no polychromatic $3$-coloring, but all of whose $h$-heavy restricted subhypergraphs are $2$-colorable.
Together with the previously known case $h=3$, this gives examples with uniformity $2h-1$ for every $h\ge 3$.
The construction is based on complements of suitable $h$-uniform hypergraphs on $3h-1$ vertices.
For $h\ge 9$ we prove existence by a simple probabilistic argument; the remaining cases $4\le h\le 8$ are certified by a short exhaustive computer check, whose fully reproducible description and source code are included in the appendix.
\end{abstract}

\textbf{Keywords:} hypergraphs, graph colorings, polychromatic coloring, panchromatic coloring, Property B

\textbf{Mathematics Subject Classification (MSC):} 05C15 (Coloring of graphs and hypergraphs)

\medskip

A hypergraph $\HH=(V,E)$ is a collection of subsets of a vertex set $V$.
It is $m$-uniform if all its edges have size $m$, and it is $h$-heavy if all its edges have size at least $h$.
For $X\subset V$, the trace of $\HH$ on $X$ is
\[
        \HH[X]=(X,\{e\cap X:e\in E(\HH)\}).
\]
A subhypergraph of a trace is called a restricted subhypergraph.
A coloring of the vertices of $\HH$ with colors $1,\ldots,k$ is polychromatic if every edge of $\HH$ contains all $k$ colors.
For $k=2$, this is the usual proper $2$-coloring of a hypergraph, also known as Property B.

The starting point is the following result from \cite{Pal-note}. For background on geometric hypergraph colorings and their relation to cover-decomposition, see the surveys \cite{survey,new-survey}.

\begin{thm}[\cite{Pal-note}]\label{thm:h3}
There exists a $5$-uniform hypergraph which has no polychromatic $3$-coloring, but all of whose $3$-heavy restricted subhypergraphs are $2$-colorable.
\end{thm}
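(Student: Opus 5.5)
We plan to follow the template the abstract describes, specialised to $h=3$: take complements of a suitable $3$-uniform hypergraph $\K$ on $3h-1=8$ vertices. Let $V$ be an $8$-element set and $\K$ a $3$-uniform hypergraph on $V$. Put
\[
\HH=\{V\setminus f : f\in E(\K)\},
\]
a $5$-uniform hypergraph. The plan has two halves, one for each property.

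\textbf{No polychromatic $3$-coloring.} A coloring of $V$ with $3$ colors is polychromatic for $\HH$ exactly when no color class contains any $f\in E(\K)$. Indeed, $V\setminus f$ misses color $c$ if and only if the whole class of $c$ lies inside $f$. Since the color classes partition $8$ points into $3$ parts, some part has size at least $3$. So it suffices to choose $\K$ with the following property: for every partition of $V$ into three classes (empty classes allowed), some class of size at most $3$ is an edge of $\K$ (a class of size $3$ is itself a triple), or is contained in one.

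One way to achieve this is to let $\K$ contain all triples except a carefully chosen family. The obstruction is partitions of type $(3,3,2)$ and $(4,3,1)$, $(4,2,2)$, $(5,2,1)$, and so on, in which every small class must avoid lying inside an edge. A cleaner equivalent goal is the following: every $3$-coloring of $V$ has a color class contained in some edge of $\K$.

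\textbf{Every $3$-heavy restricted subhypergraph is $2$-colorable.} Take $X\subset V$ and a set of traces $(V\setminus f)\cap X = X\setminus f$ each of size at least $3$. We need to $2$-color $X$ so that no such trace is monochromatic. A trace $X\setminus f$ is monochromatic in color $c$ exactly when the other color class $X\setminus(\text{class } c)$ lies inside $f$. So we must find a partition $X=A\cup B$ such that neither $A$ nor $B$ lies inside an edge $f$ with $|X\setminus f|\ge 3$.

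Taking $A$ and $B$ to be as balanced as possible helps. If $|X|\ge 7$, we take both parts of size at least $4>3$, so neither fits inside any triple, and we are done. For $|X|\le 6$, the constraint $|X\setminus f|\ge3$ gives $|f\cap X|\le |X|-3$, which heavily restricts the relevant edges. For instance, if $|X|=6$, a part of size $3$ fits in $f$ only if $f\subset X$, i.e.\ $f$ is a relevant triple. We then need a split of $X$ into two triples neither of which is in $\K[X]$, so we need $\K$ to avoid containing both halves of any split of a $6$-set into triples. The cases $|X|=5,4,3$ are analogous: we need some $A$ with both $A$ and $X\setminus A$ avoiding the sets $f\cap X$ with $|f\cap X|\le |X|-3$.

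\textbf{Main obstacle.} The difficulty is reconciling the two requirements. The first wants $\K$ dense (every $3$-partition is hit), while the second wants $\K$ sparse on every $6$-set (no complementary pair of triples). We would first try a highly symmetric $\K$, for example triples that are lines or non-lines of a structure on $8$ points (the affine cube $\mathbb F_2^3$, or $\mathbb Z_8$ with a difference condition), and verify both conditions by orbit counting. Failing that, we would fall back on a direct exhaustive search over $\K$, as the paper does for $4\le h\le 8$, since Theorem~\ref{thm:h3} is the base case established in \cite{Pal-note}.
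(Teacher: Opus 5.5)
The paper gives no proof of Theorem~\ref{thm:h3}; it quotes it from \cite{Pal-note}. Specialising the complement construction to $h=3$ is a legitimate route: your $\K$ is the paper's $\G$, and Lemma~\ref{lem:sufficient} is valid for $h=3$. But your proposal stops exactly where the content of the theorem begins. The theorem is an existence statement, and you never exhibit a family of triples on $8$ points or verify anything about one. ``Try a symmetric structure, else search'' is a plan, not a proof.

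Worse, the conditions you would be searching for are partly misstated:
\begin{itemize}
\item For $|X|=6$ you first say, correctly, that you need \emph{some} split of $X$ into two triples, neither in $\K$. You then replace this by ``no split of a $6$-set has both halves in $\K$''. That has the quantifier wrong and is not sufficient: all triples through a fixed point $p$ satisfy it, yet every split of a $6$-set containing $p$ has a half in $\K$.
\item For $|X|=7$ a balanced split is $3+4$, not $4+4$. So you also need a triple of $X$ outside $\K$. This follows from the $6$-set condition, but it must be said.
\item For $|X|=5$ the precise requirement is a pair $U\subset X$ with $U\cup\{y\}\notin\K$ for all $y\in V\setminus X$.
\item In the first half, ``contains'' should read ``is contained in''. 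More importantly, the useful pigeonhole is the other one: some color class has at most $\lfloor 8/3\rfloor=2$ vertices. So it suffices that every pair of $V$ lies in a triple of $\K$. That is a sparse covering condition ($11$ triples suffice), so the dense-versus-sparse tension you describe is much milder than you think.
\end{itemize}

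With the corrected conditions, a witness is easy to find and check by hand. On $V=\{1,\dots,7,\infty\}$ let $\K$ consist of the Fano lines $\{i,i+1,i+3\}$ (indices mod $7$) together with the four triples $\{\infty,1,2\}$, $\{\infty,3,4\}$, $\{\infty,5,6\}$, $\{\infty,1,7\}$.
\begin{itemize}
\item Every pair lies in one of these triples.
\item A $6$-set contains at most $6$ triples of $\K$: four lines if it omits $\infty$, otherwise two lines and at most four triples through $\infty$. Each triple lies in only one of the $10$ splits, so a good split exists. Hence every $7$- or $8$-set also contains a triple outside $\K$.
\item For a $5$-set $X=V\setminus Y$ with $Y=\{\infty,a,b\}$: the two lines missing $a,b$ lie in $X$. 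They contain at most two of the pairs $\{1,2\},\{3,4\},\{5,6\},\{1,7\}$, since these lie on four different lines. Any other pair on those two lines works.
\item If $Y\subset\{1,\dots,7\}$ is not a line, any pair on the unique line disjoint from $Y$ works.
\item If $Y$ is a line, it is disjoint from one of those four pairs, and $U=\{\infty,w\}$ with $w\ne 1$ in that pair works.
\end{itemize}
Some such explicit family, with this verification, is the missing step.
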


Here we prove the corresponding statement for every larger value of the heaviness parameter.

\begin{thm}\label{thm:main}
For every integer $h\ge 4$, there exists a $(2h-1)$-uniform hypergraph $\HH$ which has no polychromatic $3$-coloring, but all of whose $h$-heavy restricted subhypergraphs are $2$-colorable.
\end{thm}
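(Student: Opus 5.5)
The plan follows the construction announced in the abstract. Fix $n=3h-1$ and a vertex set $V$ with $|V|=n$. Choose an $h$-uniform hypergraph $\G$ on $V$, and let $\HH$ have edge set $\{V\setminus e: e\in E(\G)\}$. Then $\HH$ is $(2h-1)$-uniform.

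\textbf{Step 1: translating both properties into conditions on $\G$.} A $3$-coloring $V=C_1\cup C_2\cup C_3$ fails to be polychromatic for $\HH$ exactly when some edge $V\setminus e$ misses a color, i.e.\ some $C_i\subseteq e$ for an edge $e\in E(\G)$. Since $3h-1<3h$, some class has size at most $h-1$. So the first requirement is
\[
\text{(A)}\quad \text{for every partition } V=C_1\cup C_2\cup C_3 \text{ some } C_i \text{ lies in an edge of } \G .
\]
For the second property, fix $X\subseteq V$ and a sub-hypergraph of the trace using only sets $X\setminus e$ with $|X\setminus e|\ge h$. A coloring $X=R\cup B$ is bad iff for such an $e$ we have $X\setminus e\subseteq R$ or $X\setminus e\subseteq B$. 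Equivalently, $B\subseteq e$ (resp.\ $R\subseteq e$) and $|X\cap e|\le |X|-h$. It suffices to treat the full trace. So the requirement is
\[
\text{(B)}\quad \forall X\ \exists\, X=R\cup B:\ \text{no } e\in E(\G) \text{ with } S\subseteq e,\ |X\cap e|\le |X|-h \text{ for } S\in\{R,B\}.
\]

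\textbf{Step 2: disposing of most sets $X$ deterministically.} If $|X|\ge 2h+2$, split $X$ into two parts of size at least $h+1$; neither part fits in an $h$-set, so (B) holds automatically. If $|X|<h$ there are no heavy edges at all. If $S\subseteq e$ then $|X\cap e|\ge |S|$, so a part $S$ with $|S|>|X|-h$ is harmless. For $h\le |X|\le 2h+1$ this already removes many candidate bad edges. What remains is a bounded family of ``dangerous'' configurations: a pair $(X,S)$ with $|S|\le |X|-h$ together with an edge $e\supseteq S$ having $|X\cap e|\le |X|-h$. In particular (B) forces $\G$ not to be too dense.

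\textbf{Step 3: the probabilistic existence for $h\ge 9$.} Take $\G$ random, including each $h$-subset independently with probability $p$ (possibly using a more structured distribution, such as edges through a fixed small core, if the plain one fails).
\begin{itemize}
\item For (A): a fixed partition has a class $C$ with $|C|\le h-1$. The number of $h$-sets containing $C$ is at least $\binom{2h}{h-1-|C|}\ge 2h$, so it fails with probability at most $(1-p)^{2h}$ or much smaller. The union bound runs over at most $3^{3h-1}$ partitions.
\item For (B): for each $X$ with $h\le|X|\le 2h+1$, take a uniformly random balanced split and bound the expected number of dangerous edges. Then argue via many disjoint candidate splits, or a Janson/independence estimate, that some split has none. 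Union bound over the at most $2^{3h-1}$ sets $X$.
\end{itemize}
The key is to choose $p$ so that both failure probabilities are $o(1)$. I expect $p$ around $c^{-h}$ for a suitable constant $c$, and the exponents to balance only once $h$ is moderately large, which is where $h\ge 9$ should come from.

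\textbf{Step 4: small cases and the main obstacle.} For $4\le h\le 8$, I would exhibit explicit $\G$ found by computer search and verify (A) and (B) exhaustively; this is feasible since $|V|\le 23$. The main obstacle is Step 3's tension between (A), which wants $\G$ dense enough to swallow a class of every $3$-partition, and (B), which wants $\G$ sparse. The delicate case is $|X|$ close to $2h$, where balanced splits have parts of size about $h$ that can still fit inside edges. There I would first try splits that put exactly $h+1$ points on one side, making that side automatically safe. This reduces (B) to controlling edges containing the other side of size $|X|-h-1\le h$.
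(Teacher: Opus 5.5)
\textbf{There is a genuine gap in Step 3.} Your Steps 1 and 2 are sound: the complement construction on $3h-1$ points, the reformulations (A) and (B), and the automatic case $|X|\ge 2h+2$. The problems are in the probabilistic part.

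First, the density scale is wrong. If $\G$ has density $p\approx c^{-h}$, take a partition $(A,C_2,C_3)$ with class sizes $h-1,h,h$. Condition (A) holds for it only if one of the $2h$ extensions of $A$, or $C_2$, or $C_3$ lies in $\G$. That event has probability at most $(2h+2)p\to 0$, so with high probability (A) fails.

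Second, even at constant density your union bound for (A) is too lossy. Write $q=1-p$ for the density of the missing family $\K=\binom{V}{h}\setminus\G$. Charging $q^{2h}$ to each of the $3^{3h-1}$ partitions needs roughly $27q^2<1$, i.e.\ $q<3^{-3/2}\approx 0.19$. No per-partition bound does better: there are about $27^h$ partitions of type $(h-1,h,h)$ up to polynomial factors, each failing with probability $q^{2h+2}$.

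But (B) at $|X|=2h-1$ forces $q\gtrsim 1/4$. Let $Y=V\setminus X$. A split with a side $B$ of size at most $h-2$ is bad as soon as one of the $\Omega(h^2)$ $h$-sets containing $B$ and meeting $Y$ lies in $\G$. So in effect you need a split $(X\setminus U,U)$ with $|U|=h-1$. Such a split is good iff $U\cup\{y\}\notin\G$ for all $y\in Y$, and the expected number of such $U$ is of order $(4q)^h/\sqrt h$.

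The missing idea is to union-bound over $(h-1)$-sets rather than partitions. Extend the smallest color class to an $(h-1)$-set $A$ and require some $A\cup\{v\}\in\G$; this is the paper's (K1). It costs $\binom{3h-1}{h-1}q^{2h}$, of order $(27q^2/4)^h$, which leaves the window $1/4<q<2/\sqrt{27}\approx 0.385$. The paper takes $q\approx 0.35$, so $\G$ is dense, not sparse.

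Your treatment of (B) is only a sketch (``many disjoint candidate splits, or a Janson/independence estimate''). Your one concrete suggestion breaks exactly at the delicate case $|X|=2h$. For a split with $|R|=h+1$ and $|B|=h-1$, every $e\in\G$ containing $B$ gives the monochromatic trace $X\setminus e\subseteq R$ of size at least $h$. So the split is good iff no extension $B\cup\{v\}$ lies in $\G$, which is precisely what (K1) forbids. Hence at $|X|=2h$ you must split $X$ into two $h$-sets both outside $\G$, and at $|X|=2h-1$ you need the star condition above.

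The paper isolates exactly these as conditions on $\K$: the star condition (K3) for $|X|=2h-1$, and the complementary-pair condition (K2) for $|X|=2h$. (K2) also handles $|X|=2h+1$, by coloring an $h$-set of $\K$ inside $X$ red and the rest blue. Both conditions have an exactly independent structure in the random model: stars for distinct $U$ are disjoint, and complementary pairs partition $\binom{X}{h}$. The failure probabilities are therefore $(1-q^h)^{\binom{2h-1}{h-1}}$ and $(1-q^2)^{\frac12\binom{2h}{h}}$, and the union bound closes for $h\ge 9$.

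Without these conditions and the constant-density choice, Step 3 is not yet a proof. Your plan for $4\le h\le 8$ does match the paper's computer verification.
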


\noindent\textbf{Roadmap.}
Section~\ref{sec:complement} gives the complement construction and reduces Theorem \ref{thm:main} to three elementary conditions on a family of $h$-sets.
Section~\ref{sec:random} proves the existence of such families for all $h\ge 9$ by a random construction and finishes the proof of the theorem modulo the finite cases.
Section~\ref{sec:barrier} examines whether a natural covering-design method might reach uniformity $2h+c$ for fixed $c\ge 0$, and explains why the calculation becomes unfavorable as soon as $c\ge 1$, while the case $c=0$ remains a delicate possible next step.
Appendix~\ref{app:finite} gives the explicit deterministic data for $4\le h\le 8$, Appendix~\ref{app:code} contains the verification code, and Appendix~\ref{app:process} briefly records how this manuscript arose from the prompting process.

Combining Theorems \ref{thm:h3} and \ref{thm:main}, we get examples of uniformity $2h-1$ for every $h\ge 3$.

\section{A complement construction}\label{sec:complement}

The construction is most naturally described through complements.
Fix an integer $h\ge 2$ and put
\[
        n=3h-1.
\]
Let $V$ be an $n$-element set.
We shall choose a family $\K\subset {V\choose h}$.
The sets in $\K$ will be the missing $h$-sets of an auxiliary hypergraph.  Define
\[
        \G={V\choose h}\setminus \K,
\]
and then define a $(2h-1)$-uniform hypergraph $\HH$ by
\[
        E(\HH)=\{V\setminus B:B\in \G\}.
\]
Thus the edges of $\HH$ are the complements of the edges of $\G$.

We need three properties of $\K$.

\begin{description}
\item[(K1)] For every $A\in {V\choose h-1}$, not all $h$-sets of the form $A\cup\{v\}$, $v\in V\setminus A$, belong to $\K$.  Equivalently, among the $2h$ extensions of $A$ to an $h$-set, at least one is not in $\K$.

\item[(K2)] For every $X\in {V\choose 2h}$, there is an $h$-set $R\subset X$ such that
\[
        R\in \K
        \quad\hbox{and}\quad
        X\setminus R\in \K .
\]

\item[(K3)] For every $Y\in {V\choose h}$, there is a set $U\subset V\setminus Y$, $|U|=h-1$, such that
\[
        U\cup\{y\}\in \K
        \qquad\hbox{for every }y\in Y.
\]
\end{description}

\begin{lem}\label{lem:sufficient}
If $\K\subset {V\choose h}$ satisfies {\rm (K1)}, {\rm (K2)}, and {\rm (K3)}, then the hypergraph $\HH$ defined above has no polychromatic $3$-coloring, but all of its $h$-heavy restricted subhypergraphs are $2$-colorable.
\end{lem}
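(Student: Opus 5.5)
The plan is to translate both conclusions into statements about the $h$-sets $B\in\G$. For an edge $e=V\setminus B$ of $\HH$ and a set $C\subset V$, we have $e\cap C=\emptyset$ if and only if $C\subset B$. Each of (K1), (K2), (K3) will be used to rule out exactly one kind of containment $C\subset B$ with $B\in\G$.

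\emph{No polychromatic $3$-coloring.} Take any $3$-coloring of $V$. Since $|V|=3h-1<3h$, some color class $C$ has $|C|\le h-1$. Pad $C$ to a set $A\in{V\choose h-1}$ with $C\subset A$. By (K1) there is some $v\in V\setminus A$ with $B=A\cup\{v\}\notin\K$, that is, $B\in\G$. Then $C\subset B$, so the edge $V\setminus B$ misses the color of $C$.

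\emph{$2$-colorability of $h$-heavy restricted subhypergraphs.} It suffices to treat the full $h$-heavy part of a trace. So fix $X\subset V$ and consider the traces $X\setminus B$ with $B\in\G$ and $|X\setminus B|\ge h$. We look for a partition $X=P\cup Q$ such that neither $P$ nor $Q$ lies inside any such $B$. If $|X|<h$ there are no such edges, so assume $|X|\ge h$. We split into cases by $|X|$.

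If $|X|\ge 2h$, choose $X'\subset X$ with $|X'|=2h$ and apply (K2) to get $R$ with $R,\,X'\setminus R\in\K$. Put $P\supseteq R$ and $Q\supseteq X'\setminus R$, distributing $X\setminus X'$ arbitrarily. Then $|P|,|Q|\ge h=|B|$. So $P\subset B$ would force $P=B=R$, contradicting $R\notin\G$. The same argument applies to $Q$.

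If $|X|\le 2h-1$, any relevant $B$ satisfies $|X\cap B|\le |X|-h$. Hence a class of size at least $|X|-h+1$ can never lie in such a $B$. When $|X|\le 2h-2$, a balanced split gives both classes size at least $\lfloor |X|/2\rfloor\ge |X|-h+1$, and we are done.

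The delicate case, which I expect to be the only real obstacle, is $|X|=2h-1$. Here we take $|Q|=h$, which is safe by the size argument. The class $P$ has size $h-1$, and $P\subset B$ with $|X\cap B|\le h-1$ forces $X\cap B=P$, i.e.\ $B=P\cup\{y\}$ for some $y\in V\setminus X$. To exclude this, apply (K3) to $Y=V\setminus X$, which has exactly $h$ elements. This gives $U\subset X$ with $|U|=h-1$ and $U\cup\{y\}\in\K$ for all $y\in Y$. Setting $P=U$ and $Q=X\setminus U$ ensures that no $B=P\cup\{y\}$ lies in $\G$, which completes the proof.
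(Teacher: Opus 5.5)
Your proof is correct and follows the paper's route: (K1) applied to the smallest color class rules out a polychromatic $3$-coloring, a balanced split handles $|X|\le 2h-2$, (K3) applied to $Y=V\setminus X$ handles $|X|=2h-1$, and (K2) handles $|X|\ge 2h$. The differences are cosmetic: you phrase everything dually (a color class lying inside some $B\in\G$ rather than a trace edge lying inside a class), and you merge $|X|=2h$ with $|X|\ge 2h+1$ by extending a (K2)-split of a $2h$-subset, whereas the paper treats $|X|\ge 2h+1$ separately, using only $R\in\K$ and the bound $|X\setminus R|\ge h+1>|B|$.
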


\begin{proof}
First we show that $\HH$ has no polychromatic $3$-coloring.
Consider any $3$-coloring of $V$.
One color class $S$ has size at most
\[
        \left\lfloor {3h-1\over 3}\right\rfloor=h-1.
\]
Extend $S$, if necessary, to an $(h-1)$-set $A$.
By (K1), some set $A\cup\{v\}$ with $v\in V\setminus A$ does not belong to $\K$.
Hence
\[
        B=A\cup\{v\}\in \G.
\]
The corresponding edge $V\setminus B$ of $\HH$ misses the color class $S$.
Thus the coloring is not polychromatic.

It remains to prove that every $h$-heavy trace of $\HH$ is $2$-colorable.
Since every subhypergraph of a $2$-colorable hypergraph is $2$-colorable, it is enough to color the $h$-heavy part of every trace $\HH[X]$.

Let $X\subset V$.
We give a red-blue coloring of $X$ with no monochromatic trace edge of size at least $h$.

If $|X|\le 2h-2$, color $X$ as evenly as possible.
Then both color classes have size at most $h-1$, so no monochromatic $h$-heavy trace can occur.

Suppose next that $|X|=2h-1$.
Let $Y=V\setminus X$, so $|Y|=h$.
By (K3), there is a set $U\subset X$, $|U|=h-1$, such that
\[
        U\cup\{y\}\in \K
        \qquad\hbox{for every }y\in Y.
\]
Color $U$ blue and color $X\setminus U$ red.
The blue class has size $h-1$, so it contains no monochromatic $h$-heavy trace.
If there were a red monochromatic trace of size at least $h$, then, since the red class has size exactly $h$, there would be some $B\in \G$ such that
\[
        X\setminus B=X\setminus U.
\]
Equivalently,
\[
        B\cap X=U.
\]
As $B$ has size $h$ and $U$ has size $h-1$, this means $B=U\cup\{y\}$ for some $y\in Y$, contradicting the choice of $U$, because all such sets belong to $\K$.

Now suppose that $|X|=2h$.
By (K2), write $X=R\sqcup S$ with
\[
        |R|=|S|=h,
        \qquad
        R,S\in \K .
\]
Color $R$ red and $S$ blue.
If there were a red monochromatic trace of size at least $h$, then it would have to be all of $R$, and hence for some $B\in \G$ we would have
\[
        X\setminus B=R.
\]
Thus $B\cap X=S$.
Since $|B|=|S|=h$, this forces $B=S$, contradicting $S\in\K$ and $B\in\G$.
The blue case is symmetric.

Finally suppose that $|X|\ge 2h+1$.
Choose any $2h$-subset $X_0\subset X$.
By (K2), there is an $h$-set $R\subset X_0$ with $R\in \K$.
Color $R$ red and color $X\setminus R$ blue.

A red monochromatic trace of size at least $h$ would have to be exactly $R$.
This would imply $X\setminus R\subset B$ for some $B\in\G$, impossible because
\[
        |X\setminus R|\ge h+1
        \quad\hbox{while}\quad
        |B|=h.
\]
A blue monochromatic trace would imply $R\subset B$ for some $B\in\G$.
Since $|R|=|B|=h$, this would force $B=R$, contradicting $R\in\K$.
This completes the proof.
\end{proof}

\section{The random construction}\label{sec:random}

We now prove that suitable families $\K$ exist for all $h\ge 9$.

Choose every $h$-subset of $V$, independently, to belong to $\K$ with probability $q$.
We estimate the probability that one of (K1), (K2), (K3) fails.

For a fixed $(h-1)$-set $A$, the probability that all $2h$ extensions $A\cup\{v\}$ belong to $\K$ is $q^{2h}$.
Thus the expected number of violations of (K1) is
\[
        {3h-1\choose h-1}q^{2h}.
\]

For a fixed $2h$-set $X$, the complementary pairs
\[
        R,\ X\setminus R
        \qquad (R\in {X\choose h})
\]
partition the family ${X\choose h}$ into $\frac12{2h\choose h}$ pairs.
The events that a given complementary pair has both members in $\K$ are therefore mutually independent, since every $h$-set of $X$ appears in exactly one pair.
Hence the probability that no complementary pair has both members in $\K$ is
\[
        (1-q^2)^{\frac12{2h\choose h}}.
\]
Thus the expected number of violations of (K2) is at most
\[
        {3h-1\choose 2h}(1-q^2)^{\frac12{2h\choose h}}.
\]

Finally fix $Y\in {V\choose h}$.
For a fixed $U\subset V\setminus Y$, $|U|=h-1$, the event
\[
        U\cup\{y\}\in\K
        \qquad\hbox{for all }y\in Y
\]
has probability $q^h$.
For different choices of $U$, these events involve disjoint $h$-sets: an $h$-set with exactly one point in $Y$ determines both its unique point of $Y$ and its $(h-1)$-set outside $Y$.
Hence the events are independent.
There are ${2h-1\choose h-1}$ choices for $U$.
Therefore the probability that $Y$ violates (K3) is
\[
        (1-q^h)^{{2h-1\choose h-1}}.
\]
The expected number of violations of (K3) is
\[
        {3h-1\choose h}(1-q^h)^{{2h-1\choose h-1}}.
\]

Consequently, if
\[
\begin{aligned}
\Phi_h(q)=&
{3h-1\choose h-1}q^{2h}
+
{3h-1\choose 2h}(1-q^2)^{\frac12{2h\choose h}}\\
&+
{3h-1\choose h}(1-q^h)^{{2h-1\choose h-1}}
<1,
\end{aligned}
\tag{1}\label{eq:Phi}
\]
then a suitable family $\K$ exists.

\begin{lem}\label{lem:large}
For every $h\ge 9$, there is a family $\K\subset {V\choose h}$ satisfying {\rm (K1)}, {\rm (K2)}, and {\rm (K3)}.
\end{lem}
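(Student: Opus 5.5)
The plan is to apply the first moment bound \eqref{eq:Phi} directly: for each $h\ge 9$ I will exhibit a value $q=q_h\in(0,1)$ with $\Phi_h(q)<1$. Lemma~\ref{lem:large} then follows, since the expected number of violations of (K1)--(K3) is less than $1$. A single fixed $q$ cannot work for all $h$, so the argument has two regimes: a finite range $9\le h\le H_0$ handled by explicit numerical evaluation, and a tail $h>H_0$ handled with $q=1/3$ and crude analytic bounds.

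\emph{Heuristics for choosing $q$.} I would first record the rough sizes of the three terms. For the first term, ${3h-1\choose h-1}\le (27/4)^h$, so it is at most $\bigl(\tfrac{27}{4}q^2\bigr)^h$; this needs $q<\sqrt{4/27}\approx 0.385$ asymptotically. For the third term, use $(1-x)^N\le e^{-xN}$ and ${2h-1\choose h-1}\ge 4^h/(2\sqrt{h})$ (a standard central binomial estimate, since ${2h-1\choose h-1}=\frac12{2h\choose h}$). The term is then at most
\[
(27/4)^h\exp\bigl(-(4q)^h/(2\sqrt h)\bigr),
\]
which is harmless once $q>1/4$ and $h$ is large. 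The second term is at most $(27/4)^h\exp\bigl(-q^2 4^h/(4\sqrt h)\bigr)$. It is negligible throughout, because the exponent grows like $4^h$ for any fixed $q$ bounded away from $0$.

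\emph{Tail, $h\ge 25$, with $q=1/3$.} The first term is at most $(3/4)^h$, which is tiny. The third term is at most
\[
\exp\bigl(1.91h-(4/3)^h/(2\sqrt h)\bigr).
\]
At $h=25$ the subtracted quantity is about $132$, against $1.91h\approx 48$. For monotonicity, the ratio $(4/3)^h/(2\sqrt h)$ grows by a factor at least $(4/3)\sqrt{h/(h+1)}>1.3$ per step, while $1.91h$ grows additively, so the inequality persists for all $h\ge 25$. Adding the three bounds then gives $\Phi_h(1/3)<1$ for $h\ge 25$.

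\emph{Finite range, $9\le h\le 24$.} Here I would evaluate $\Phi_h(q)$ numerically, or with rigorous interval arithmetic, for a suitably tuned $q_h$. The main obstacle is $h=9$, where the window is genuinely narrow. A rough computation gives the following:
\begin{itemize}
\item at $q=1/3$ the third term is about $3\cdot 10^6\cdot e^{-1.2}$, far too large;
\item at $q=0.44$ the first and third terms are about $0.59$ and $0.89$, summing to more than $1$;
\item at $q=0.45$ they are about $0.89$ and $0.03$, with the second term astronomically small, so $\Phi_9(0.45)\approx 0.92<1$.
\end{itemize}
So for $h=9$ I would take $q_9=0.45$ and verify the three terms carefully, since the margin is only about $8\%$. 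For $10\le h\le 24$ the window widens as $h$ grows, and I would let $q_h$ decrease from about $0.45$ toward $1/3$. The natural choice is the $q$ roughly balancing the first and third terms, i.e.\ $q^h{2h-1\choose h-1}\approx \ln {3h-1\choose h}+\text{margin}$. A short table of $(h,q_h,\Phi_h(q_h))$ then completes the proof. If one prefers to avoid a table, the analytic tail argument could instead be tightened using exact binomial values down to a smaller $H_0$. The delicate point in either approach is that $h=9$ is essentially the threshold where $\Phi_h<1$ first becomes achievable.
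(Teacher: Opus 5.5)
Your plan is essentially the paper's proof. The paper also tabulates an explicit $q_h$ with $\Phi_h(q_h)<1$ for the finite range $9\le h\le 30$; there $q_9=0.447$ gives $\Phi_9\approx 0.886$, and your $q_9=0.45$, giving about $0.93$, also works. The paper then handles the tail analytically with $q=7/20$ for $h\ge 31$, where you use $q=1/3$ for $h\ge 25$.

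One slip to fix: ${2h\choose h}\ge 4^h/(2\sqrt h)$ only gives ${2h-1\choose h-1}\ge 4^h/(4\sqrt h)$, so your stated bound is off by a factor of $2$ (it is false for every $h$). The subtracted quantity in the third term at $h=25$ is therefore about $66$, not $132$. This is still larger than $1.91h\approx 48$ and still grows by a factor above $1.3$ per step, so your tail argument survives.
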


\begin{proof}
For $9\le h\le 30$, the following table gives a value of $q$ for which $\Phi_h(q)<1$.

\[
\begin{array}{c|c|c}
h&q&\Phi_h(q)\\ \hline
9&0.447&0.886370744736\\
10&0.430&0.510452111104\\
11&0.415&0.285146456892\\
12&0.403&0.154487361262\\
13&0.393&0.081897387503\\
14&0.384&0.042685390818\\
15&0.376&0.021870669774\\
16&0.369&0.011061930960\\
17&0.363&0.005547730636\\
18&0.357&0.002755290720\\
19&0.350&0.009734044111\\
20&0.350&0.000809153106\\
21&0.350&0.000653057474\\
22&0.350&0.000527692530\\
23&0.350&0.000426826780\\
24&0.350&0.000345562253\\
25&0.350&0.000280009093\\
26&0.350&0.000227070270\\
27&0.350&0.000184274427\\
28&0.350&0.000149645493\\
29&0.350&0.000121600542\\
30&0.350&0.000098869485
\end{array}
\]

For completeness, let us also record a simple analytic estimate for all $h\ge 31$.
Take $q=7/20$.
Using
\[
        {3h\choose h}\le \left({27\over 4}\right)^h
        \quad\hbox{and}\quad
        {2h\choose h}\ge {4^h\over 2h+1},
\]
the three terms of \eqref{eq:Phi} are bounded respectively by
\[
        \left({1323\over 1600}\right)^h,
\]
\[
        \left({27\over 4}\right)^h
        \exp\left(-{49\over 400}\cdot {4^h\over 2(2h+1)}\right),
\]
and
\[
        \left({27\over 4}\right)^h
        \exp\left(-{(7/5)^h\over 2(2h+1)}\right).
\]
At $h=31$ these upper bounds are already less than
\[
        0.003,\qquad 10^{-10^6},\qquad 10^{-90},
\]
respectively. The first bound decreases because $1323/1600<1$.
For the second and third bounds, the logarithms of the ratios of two consecutive bounds are respectively
\[
\log {27\over4}-{49\over400}\left({4^{h+1}\over 2(2h+3)}-{4^h\over 2(2h+1)}\right)
\]
and
\[
\log {27\over4}-\left({(7/5)^{h+1}\over 2(2h+3)}-{(7/5)^h\over 2(2h+1)}\right),
\]
which are negative for every $h\ge 31$.
Thus $\Phi_h(7/20)<1$ for every $h\ge 31$.
The union bound proves the lemma.
\end{proof}

\begin{proof}[Proof of Theorem \ref{thm:main}]
For $4\le h\le 8$, take the family $\K_h$ from Lemma \ref{lem:small} (from Appendix~\ref{app:finite}).
For $h\ge 9$, take a family $\K$ whose existence is guaranteed by Lemma \ref{lem:large}.
In both cases, construct $\HH$ from $\K$ as in Section~\ref{sec:complement}.
Lemma \ref{lem:sufficient} shows that $\HH$ is $(2h-1)$-uniform, has no polychromatic $3$-coloring, and has all $h$-heavy restricted subhypergraphs $2$-colorable.
\end{proof}

\begin{remark}
The construction is not meant to optimize the number of edges.
For $h\ge 9$, the random family $\K$ has density about $q$, and hence the auxiliary family $\G={V\choose h}\setminus\K$ is quite dense.
The point is that the complements of the edges of $\G$ have exactly the uniformity $2h-1$, matching the previously known example for $h=3$.
\end{remark}

\section{A barrier for this method beyond \texorpdfstring{$2h$}{2h}}\label{sec:barrier}

Let us examine whether the same complement-covering method could plausibly give uniformity
\[
        m=2h+c,
\]
where \(c\ge 0\) is fixed.  The first plausible value of \(n\) in this framework is
\[
        n=3h+3c+2.
\]
Indeed, for smaller values of \(n\) the covering condition itself forces a monochromatic \(h\)-heavy trace on a suitable set \(X\) of size about \(2h+c\).  With this choice of \(n\), the auxiliary blocks have size
\[
        n-m=h+2c+2
\]
and must cover every \((h+c)\)-set.
The case \(c=0\) is exactly the first possible improvement over Theorem~\ref{thm:main}: it asks for a \(2h\)-uniform construction on \(3h+2\) vertices whose auxiliary \(h+2\)-sets cover all \(h\)-sets.

A near-optimal random covering has local density roughly
\[
        {1\over {2h+2c+2\choose c+2}},
\]
the reciprocal of the number of auxiliary blocks containing a fixed \((h+c)\)-set.  Now fix a \((2h-1)\)-set \(X\) and an \((h-1)\)-set \(U\subset X\).  The number of auxiliary blocks \(B\) with \(B\cap X=U\) is
\[
        {h+3c+3\choose 2c+3}.
\]
Therefore the expected number of blocks realizing this particular \(U\) is
\[
        { {h+3c+3\choose 2c+3}\over {2h+2c+2\choose c+2}}
        =\Theta_c(h^{c+1}).
\]
On the other hand, the number of possible choices of \(U\) is
\[
        {2h-1\choose h-1}=\exp(\Theta(h)).
\]
Thus for \(c\ge 1\) the expected number of realizations of each fixed \(U\) is already superlinear in \(h\), and the heuristic probability that \(U\) remains a hole is \(\exp(-\Theta_c(h^{c+1}))\).  This is much smaller than the inverse of the number of possible \(U\)'s.  Consequently, even for one fixed \(X\), a locally random near-Steiner covering should leave no holes at all with high probability.  Then the trace of \(\HH\) on \(X\) contains the complete \(h\)-uniform hypergraph \(K_{2h-1}^{(h)}\), and hence is not \(2\)-colorable.

This is not a non-existence result for \((2h+c)\)-uniform examples; a more structured construction might avoid the random-covering behaviour described above.  It does show, however, that the covering-design method has a natural barrier after \(m=2h\).  For every fixed \(c\ge 1\), the expected number of holes in the decisive \((2h-1)\)-traces becomes negligible.  For \(c=0\), the same calculation gives only \(\Theta(h)\) realizations of a fixed \(U\), so it does not rule out the possibility of many holes.  This suggests that the \(2h\)-uniform case might still be accessible, but probably only through a carefully constructed near-Steiner covering, such as one produced by a random-greedy or nibble-type argument rather than by independent random selection.

\begin{problem}\label{prob:2h}
Is the uniformity \(2h-1\) best possible?
Equivalently, can one construct, for some \(h\), a \(2h\)-uniform hypergraph with no polychromatic \(3\)-coloring while all of its \(h\)-heavy restricted subhypergraphs are \(2\)-colorable?
\end{problem}

\clearpage
\section*{Appendix}
\appendix

\section{The remaining finite cases}\label{app:finite}

It remains to handle $4\le h\le 8$.
For these cases we give explicit deterministic families $\K$ using a fixed integer hash function.
This keeps the description short and reproducible.  The construction in this appendix is not meant to be conceptually important; it is simply a compact way to specify five finite witnesses that can be independently checked.

For a subset $S\subset \{0,\ldots,3h-2\}$, write
\[
        b(S)=\sum_{i\in S}2^i.
\]
Let
\[
        c=\mathtt{0x9e3779b97f4a7c15}.
\]
The prefix $\mathtt{0x}$ means that the integer is written in base $16$; thus $c$ is a fixed $64$-bit integer.  It is the standard odd increment used in the SplitMix64 mixing routine, but no special property of this particular number is used here except that it gives a reproducible deterministic choice of the finite families.  The symbols $\oplus$ and $\gg$ below denote bitwise exclusive-or and right shift, respectively.
For an integer $x$, define
\[
\begin{aligned}
z_0&=x+c,\\
z_1&=(z_0\oplus (z_0\gg 30))\cdot \mathtt{0xbf58476d1ce4e5b9},\\
z_2&=(z_1\oplus (z_1\gg 27))\cdot \mathtt{0x94d049bb133111eb},
\end{aligned}
\]
where all operations are performed modulo $2^{64}$, and put
\[
        \sigma(x)=z_2\oplus (z_2\gg 31).
\]
For the five finite values of $h$, define
\[
        \K_h=
        \left\{
        S\in {V\choose h}:
        \sigma\bigl(s_h\oplus c\,b(S)\bigr)\bmod 10^6<t_h
        \right\},
\]
where the product $c\,b(S)$ is also reduced modulo $2^{64}$ before taking the bitwise xor.
The parameters are as follows.

\[
\begin{array}{c|c|c|c|c}
h&n& s_h&t_h&|\K_h|\\ \hline
4&11&600&650000&215\\
5&14&102&550000&1135\\
6&17&77&550000&6761\\
7&20&55&520000&40268\\
8&23&3&470000&230726
\end{array}
\]

\begin{lem}\label{lem:small}
For every $4\le h\le 8$, the family $\K_h$ defined above satisfies {\rm (K1)}, {\rm (K2)}, and {\rm (K3)}.
\end{lem}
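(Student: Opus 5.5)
This is a finite statement about five explicitly specified families, so the proof will be a direct exhaustive verification by computer. The plan is to run the check in Appendix~\ref{app:code} and to argue that the program faithfully encodes (K1), (K2) and (K3) for each $\K_h$, $4\le h\le 8$.

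\textbf{Step 1: build the families.} For each $h$, set $V=\{0,\ldots,3h-2\}$. Enumerate all $h$-subsets $S$ by their bitmasks $b(S)$, evaluate $\sigma(s_h\oplus c\,b(S))$ with unsigned 64-bit arithmetic, and store membership in $\K_h$ in a hash set or a bit array indexed by $b(S)$. As a sanity check, compare the resulting sizes with the values $|\K_h|$ in the table, which guards against implementation errors in the hash.

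\textbf{Step 2: check the three conditions.}
\begin{itemize}
\item (K1): for each of the ${3h-1\choose h-1}$ sets $A$, check that some $v\notin A$ has $b(A)+2^v\notin\K_h$.
\item (K2): for each $2h$-set $X$, search the $\frac12{2h\choose h}$ complementary pairs $\{R,X\setminus R\}$ for a pair with both members in $\K_h$. To avoid double counting, fix the smallest element of $X$ to lie in $R$.
\item (K3): for each $h$-set $Y$, loop over the $(h-1)$-subsets $U$ of $V\setminus Y$ and test whether $U\cup\{y\}\in\K_h$ for all $y\in Y$.
\end{itemize}
Every loop exits on the first witness found. Since each family has density about $1/2$, witnesses appear quickly in practice.

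\textbf{Step 3: cost.} The main obstacle is the running time for $h=8$, where $n=23$.
\begin{itemize}
\item (K2) ranges over ${23\choose16}=245157$ sets $X$, each with up to $6435$ pairs. This is fine because of early termination.
\item (K3) ranges over ${23\choose 8}=490314$ sets $Y$, each with up to ${15\choose 7}=6435$ candidates $U$ and $8$ lookups per candidate. The worst case is roughly $2.5\cdot10^{10}$ operations, but early exit brings it to a manageable size.
\end{itemize}
If this is still slow, I would index $\K_h$ by a bit array over all $2^{23}$ masks, giving $O(1)$ lookups, or parallelize over $Y$.

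With all three checks returning success for each $h\in\{4,\ldots,8\}$, the lemma follows. The proof text will then record the reproducible description and state that the code certifies the result.
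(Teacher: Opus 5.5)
Your proposal is correct and is essentially the paper's proof, which is exactly the exhaustive check carried out by the program in Appendix~\ref{app:code}, including the same device of fixing the smallest element of $X$ in $R$ for (K2). The only difference is in (K3): the paper loops over the $(h-1)$-sets $U$ and marks as covered every $h$-subset of $\{v\notin U: U\cup\{v\}\in\K_h\}$, which is much cheaper than your search over $U$ for each $Y$ and removes the running-time concern you raise for $h=8$.
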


\begin{proof}
This was checked exhaustively by the deterministic program in Appendix~\ref{app:code}.
The finite verification uses only integer arithmetic and contains no random choices.
The program verifies (K1), then verifies (K3) by marking all $h$-sets $Y$ that are covered by a star
\[
        \{U\cup\{y\}:y\in Y\}\subset \K_h,
        \qquad |U|=h-1,
\]
and finally verifies (K2) by checking every $2h$-set.
The output is
\[
\begin{array}{c|c|c|c}
h&n&|\K_h|&|\G_h|\\ \hline
4&11&215&115\\
5&14&1135&867\\
6&17&6761&5615\\
7&20&40268&37252\\
8&23&230726&259588.
\end{array}
\]
Thus all three conditions hold in the five remaining cases.
\end{proof}

\section{Verification code}\label{app:code}

The following Python program verifies the finite cases $4\le h\le 8$ and prints the numerical union bounds used for $9\le h\le 30$.
It uses no external packages.

{\footnotesize
\begin{verbatim}
from itertools import combinations
from math import comb, exp, log, log1p

MASK64 = (1 << 64) - 1
SMCONST = 0x9e3779b97f4a7c15

# Finite constructions for h=4,...,8.
# K consists of those h-sets whose hash value is below threshold/10^6.
PARAMS = {
    4: (600, 650000),
    5: (102, 550000),
    6: (77, 550000),
    7: (55, 520000),
    8: (3, 470000),
}

# Values of q used in the union bound for h=9,...,30.
QVALUES = {
     9: .447, 10: .430, 11: .415, 12: .403, 13: .393,
    14: .384, 15: .376, 16: .369, 17: .363, 18: .357,
    19: .350, 20: .350, 21: .350, 22: .350, 23: .350,
    24: .350, 25: .350, 26: .350, 27: .350, 28: .350,
    29: .350, 30: .350,
}

def splitmix64(x):
    x = (x + SMCONST) & MASK64
    x = ((x ^ (x >> 30)) * 0xbf58476d1ce4e5b9) & MASK64
    x = ((x ^ (x >> 27)) * 0x94d049bb133111eb) & MASK64
    return (x ^ (x >> 31)) & MASK64

def mask(c):
    ans = 0
    for i in c:
        ans |= 1 << i
    return ans

def masks(n, r):
    for c in combinations(range(n), r):
        yield mask(c)

def in_K(S, seed, threshold):
    z = splitmix64(seed ^ ((S * SMCONST) & MASK64))
    return (z % 1000000) < threshold

def finite_construction(h):
    seed, threshold = PARAMS[h]
    n = 3*h - 1
    hsets = list(masks(n, h))
    K = {S for S in hsets if in_K(S, seed, threshold)}
    return n, hsets, K

def verify_finite_case(h):
    n, hsets, K = finite_construction(h)
    full = (1 << n) - 1

    # (K1): for every (h-1)-set A, not all extensions A+v are in K.
    for A in masks(n, h-1):
        if all((A | (1 << v)) in K for v in range(n) if not (A >> v) & 1):
            return False, "K1"

    # (K3): every h-set Y is covered by some star U+Y.
    # Mark all Y that are contained in N_K(U) for some (h-1)-set U.
    covered = set()
    for U in masks(n, h-1):
        N = [v for v in range(n)
             if not (U >> v) & 1 and (U | (1 << v)) in K]
        if len(N) >= h:
            for Y in combinations(N, h):
                covered.add(mask(Y))
    if len(covered) != len(hsets):
        return False, "K3"

    # (K2): every 2h-set has a complementary pair R, X-R in K.
    for X_tuple in combinations(range(n), 2*h):
        X = mask(X_tuple)
        first, rest = X_tuple[0], X_tuple[1:]
        ok = False
        for sub in combinations(rest, h-1):
            R = (1 << first) | mask(sub)
            if R in K and (X ^ R) in K:
                ok = True
                break
        if not ok:
            return False, "K2"

    return True, "OK"

def logsumexp(values):
    m = max(values)
    return m + log(sum(exp(x-m) for x in values))

def union_bound(h, q):
    n = 3*h - 1
    N = comb(2*h, h) // 2
    M = comb(2*h - 1, h - 1)
    logs = [
        log(comb(n, h-1)) + 2*h*log(q),
        log(comb(n, 2*h)) + N*log1p(-q*q),
        log(comb(n, h)) + M*log1p(-(q**h)),
    ]
    return exp(logsumexp(logs))

if __name__ == "__main__":
    print("Finite cases:")
    for h in range(4, 9):
        ok, reason = verify_finite_case(h)
        n, hsets, K = finite_construction(h)
        print(f"h={h}, n={n}, |K|={len(K)}, |G|={len(hsets)-len(K)}, {reason}")

    print("\nUnion bounds:")
    for h in range(9, 31):
        q = QVALUES[h]
        print(f"h={h}, q={q:.3f}, bound={union_bound(h, q):.12g}")
\end{verbatim}}

\section{A brief account of the prompting process}\label{app:process}
The discussion began with an unsuccessful attempt to extend Theorem \ref{thm:h3} to higher uniformity while keeping the $3$-heavy condition; this remains open as the first case of Problem \ref{prob:2h}.
The successful part of the conversation shifted from $3$-heavy traces to $h$-heavy traces, leading to the complement formulation in terms of an auxiliary family of $h$-sets and ultimately to Theorem \ref{thm:main}.

During the discussion, an initially proposed sufficient condition was found to be incomplete in the delicate case $|X|=2h$.
A follow-up prompt pointed out this gap, which led to the strengthened complementary-pair condition (K2).
Subsequent prompts asked for constructions valid for small values of $h$, for random and deterministic ways to find the finite witnesses, and finally for a LaTeX manuscript in the style of the preceding note.
The appendices record the finite verification in a reproducible form.


\begin{thebibliography}{99}\small

\bibitem{Ber72} C. Berge, Balanced matrices, Mathematical Programming 2, 19--31, 1972.

\bibitem{Pal-note}
D. P\'alv\"olgyi,
Note on polychromatic coloring of hereditary hypergraph families,
Graphs and Combinatorics 40 (2024), article 131.

\bibitem{survey}
J. Pach, D. P\'alv\"olgyi, G. T\'oth,
Survey on Decomposition of Multiple Coverings,
in Geometry, Intuitive, Discrete, and Convex
(I. B\'ar\'any, K. J. B\"or\"oczky, G. Fejes T\'oth, J. Pach eds.),
Bolyai Society Mathematical Studies 24, 219--257, Springer-Verlag, 2014.

\bibitem{new-survey}
G. Dam\'asdi, B. Keszegh, J. Pach, D. P\'alv\"olgyi, G. T\'oth,
Coloring Geometric Hypergraphs: A Survey,
in: \emph{New Probes into Discrete and Convex Geometry},
Bolyai Society Mathematical Studies 33,
J. Pach and G. T\'oth (eds.), to appear;
also available as arXiv:2512.09509, 2025, \url{https://arxiv.org/abs/2512.09509}.

\end{thebibliography}
\end{document}